\newtheorem{theorem}{Theorem}
\newtheorem{definition}[theorem]{Definition}
\newtheorem{lemma}[theorem]{Lemma}
\newtheorem{corollary}[theorem]{Corollary}
\newtheorem{example}[theorem]{Example}
\newtheorem{remark}[theorem]{Remark}
\title{Second Variation of $F$-Einstein-Hilbert Functional}
\author{Ahmed Mohammed Cherif\footnote{University Mustapha Stambouli Mascara, Faculty of Exact Sciences, Mascara 29000, Algeria. Email: a.mohammedcherif@univ-mascara.dz}}
\date{}
\begin{document}
\maketitle

\begin{abstract}
This article describes a formula for second variation of generalized Einstein-Hilbert functional on Riemannian manifolds.
This work extends the definition of stable Einstein manifolds, and we present some properties.
\end{abstract}

\begin{flushleft}
Keywords:  Einstein manifolds, Einstein-Hilbert functional.\\
Subjclass: 53C25, 83C05.
\end{flushleft}

\maketitle

\section{Introduction}
 The Einstein-Hilbert functional $\mathcal{E}$ associates to each Riemannian metric $g$ the integral of its scalar curvature $S$, that is
\begin{equation}\label{eq0}
\mathcal{E}:\mathcal{M}\longrightarrow\mathbb{R},\quad g\longmapsto \mathcal{E}(g)=\int_M Sv^g,
\end{equation}
where $\mathcal{M}$ is the set of smooth Riemannian metrics on $M$, and $v^g$ the volume form with respect
to $g$. It is the action functional that defines the dynamics of gravity in general relativity \cite{berger,Besse,calabi1,calabi,hilbert,Richard}.\\
 One of the simplest modifications to general relativity is the $F(S)$ gravity in which the Lagrangian density $F$ is an arbitrary smooth function of the scalar curvature $S$ of a Riemannian manifold $(M,g)$. When $F(s)=s$, gives the classical Einstein-Hilbert functional, therefore the Einstein gravity, corresponds to $F(S)=S$.
 The Euler-Lagrange equation of the generalized Einstein-Hilbert functional (it is known by Einstein-Hilbert functional in $f(R)$ gravity, or briefly $F$-Einstein-Hilbert functional) with respect to $g$ is proved by A. D. Felice, S. Tsujikawa in \cite{Felice}, and T. P. Sotiriou, V. Faraoni in \cite{Sotiriou}.\\
 The second variation of Einstein-Hilbert functional at Einstein metrics was considered in \cite{Koiso}.
 In \cite{Klaus}, K. Kr\"{o}ncke  study the second variation of the Einstein-Hilbert functional on Einstein metrics,
 he find some conditions for stability of Einstein manifolds with respect to the Einstein-Hilbert functional, i.e., that the second variation of the Einstein-Hilbert functional at the metric is nonpositive in the direction of transverse-traceless tensors. Stability properties of compact Riemannian Einstein manifold play a role in mathematical general relativity \cite{AM}, and in geometric analysis to understand rigidity of Riemannian structures, for example the dynamical behaviour of the Ricci flow.\\
 In this paper, we extend the definition of the Einstein tensor, where we calculate the first variation of the $F$-Einstein-Hilbert functional, and we conclude the generalized Einstein tensor.
 We prove that the generalized Einstein tensor is divergence-free. We study the second variation of the $F$-Einstein-Hilbert functional on the Riemannian manifold. The second variation formula
  gives a tool/is a prerequisite for the study the stability of any generalized Einstein manifold, and to see if the $F$-Einstein-Hilbert functional has extremality properties at some critical points.
  The smooth function $F$ can be chosen for the existence and the stability of such Riemannian metrics which provide additional information on Riemannian manifolds.

\section{$F$-Einstein-Hilbert functional}
First, we give some definitions. Let $(M,g)$ be an $n$-dimensional Riemannian manifold, and let $X,X_1,...,X_{q-1},Y,Z\in \Gamma(TM)$.
By $R$, $\operatorname{Ric}$ and $S$
we denote respectively the Riemannian curvature tensor, the Ricci tensor and the scalar curvature of  $(M,g)$.
Thus $R$, $\operatorname{Ric}$ and $S$ are defined by
\begin{equation}\label{curvature}
    R(X,Y)Z=\nabla_X \nabla_Y Z-\nabla_Y \nabla_X Z-\nabla_{[X,Y]}Z,
\end{equation}
\begin{equation}\label{ricci}
    \operatorname{Ric}(X,Y)=g(R(X,e_i)e_i,Y),\quad S=\operatorname{Ric}(e_i,e_i),
\end{equation}
where $\nabla$ is the Levi-Civita connection with respect to $g$, $\{e_1,...,e_n\}$ is an orthonormal frame.\\
Given a smooth function $f$ on $M$,
 the gradient of $f$ is defined by
\begin{equation}\label{gradient}
    g(\operatorname{grad}f,X)=X(f),
\end{equation}
     the Hessian of $f$ is defined by
\begin{equation}\label{hessian}
    (\operatorname{Hess} f)(X,Y)=g(\nabla_X \operatorname{grad}f,Y),
\end{equation}
 the Laplacian of $f$ is defined by
\begin{equation}\label{laplacian}
 \Delta f=-\operatorname{Tr} (\operatorname{Hess} f).
\end{equation}
The divergence of $(0,q)$-tensor $\alpha$ on $M$ is defined by
\begin{equation}\label{divergence}
    (\delta \alpha)(X_1,...,X_{q-1})=-(\nabla_{e_i}\alpha)(e_i,X_1,...,X_{q-1}).
\end{equation}
The formal adjoint of the divergence $\delta:\Gamma(\otimes^{2}T^*M)\longrightarrow\Gamma(T^*M)$ is the map $\delta^*:\Gamma(T^*M)\longrightarrow\Gamma(\otimes^{2}T^*M)$ defined by
\begin{equation}\label{adjoint of delta}
    (\delta^* \alpha)(X,Y)=\frac{1}{2}\big((\nabla_X \alpha)Y+(\nabla_Y\alpha)X\big).
\end{equation}
The formal adjoint of the Levi-civita connection $\nabla$ is given by
\begin{equation}\label{adjoint of nabla}
(\nabla^*\alpha)(X_1,...,X_{q-1})=-(\nabla_{e_i}\alpha)(e_i,X_1,...,X_{q-1}),
\end{equation}
where $\alpha\in\Gamma(T^*M\otimes T^{(p,q)}M)$, and $\{e_1,...,e_n\}$ is an orthonormal frame.\\
The composition of $T,Q\in\Gamma(\odot^2 T^*M)$ is defined by
\begin{equation}\label{composition}
    (T\circ Q)(X,Y)=T(X,e_i)Q(Y,e_i),
\end{equation}
where $\{e_1,...,e_n\}$ is an orthonormal frame on $M$.\\
For $T\in\Gamma(\otimes^{2}T^*M)$, we define $T^\sigma\in\Gamma(\odot^2 T^*M)$ by
\begin{equation}\label{sigma}
T^\sigma(X,Y)=\frac{1}{2}\big(T(X,Y)+T(Y,X)\big).
\end{equation}
We define an endomorphism $\overset{\circ}{R}:\Gamma(\odot^2 T^*M)\longrightarrow\Gamma(\odot^2 T^*M)$ by
\begin{equation}\label{endomorphism}
    (\overset{\circ}{R} T)(X,Y)=T(R(e_i,X)Y,e_i).
\end{equation}
For $T\in\Gamma(\odot^2 T^*M)$, we define the Lichnerowicz Laplacian by
\begin{equation}\label{Lichnerowicz Laplacian}
    \Delta_L T=\nabla^*\nabla T+\operatorname{Ric}\circ T+T\circ \operatorname{Ric}-2\overset{\circ}{R} T.
\end{equation}
(For more details, see for example \cite{Besse}, \cite{ON}).

\begin{definition}[\cite{Felice}, \cite{Sotiriou}]\label{definition1}
We let $\mathcal{M}$ denote the space of Riemannian metrics on a closed orientable manifold $M$.
The generalized Einstein-Hilbert functional (or $F$-Einstein-Hilbert functional) is defined by
\begin{equation}\label{eq1}
\mathcal{E}_{F}:\mathcal{M}\longrightarrow\mathbb{R},\quad g\longmapsto \mathcal{E}_{F}(g)=\int_M F(S)v^g,
\end{equation}
where $S$ is the scalar curvature of $(M,g)$, and $F:\mathbb{R}\longrightarrow\mathbb{R}$ is a non-constant smooth
function.
\end{definition}

The Definition \ref{definition1}, is a natural generalization of Einstein-Hilbert functional or the total
scalar curvature, when $F$ is the identity map, then $\mathcal{E}_{F}$ reduces to the usual Einstein-Hilbert functional whose second order infinitesimal behaviour is well understood
(see \cite{berger,Besse,calabi1,calabi,hilbert,Koiso1,Koiso,Koiso3,Koiso4,Richard}).\\
Let $(M,g)$ be a closed orientable Riemannian manifold. Consider a smooth one-parameter
variation of the metric $g$, i.e., a smooth family of metrics $(g_t)$ with $-\epsilon<t<\epsilon$, such that $g_0=g$. Take local coordinates $(x^i)$ on $M$, and write the metric on $M$ in the usual way as $g_t=g_{i,j}(t,x)dx^i\otimes dx^j$.
Write $h=(\partial g_t/\partial t)_{t=0}$, then $h\in\Gamma(\odot^2T^*M)$ is a symmetric 2-covariant tensor field on $M$, we get the following.

\begin{theorem}[\cite{Felice}, \cite{Sotiriou}]\label{th1}
The first variation of the $F$-Einstein-Hilbert functional in the direction of $h$ is given by the formula
\begin{equation}\label{eq2}
    \frac{d}{dt}\mathcal{E}_{F}(g_t)\Big|_{t=0}=-\int_M \langle E_F(g),h\rangle v^g,
\end{equation}
where $\langle ,\rangle $ is the induced Riemannian metric on $\otimes^2T^*M$,
\begin{eqnarray}\label{eq3}
E_F(g)
   &=&F'(S)\operatorname{Ric}-\operatorname{Hess} F'(S)
   -\big(\Delta F'(S)+\frac{1}{2}F(S)\big)g,
\end{eqnarray}
and $F'$ is the derivative of the function $F$.
\end{theorem}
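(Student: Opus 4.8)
The plan is to combine three classical ingredients — the first variation of the Riemannian volume element, the linearization of the scalar curvature, and integration by parts — with the chain rule applied to the outer function $F$. Throughout I use that, for symmetric $2$-tensors, $\operatorname{Tr}_g h = g^{ij}h_{ij} = \langle g,h\rangle$, and that all integrations by parts below are legitimate because $M$ is closed (so $\int_M \operatorname{div}V\,v^g = 0$ for every vector field $V$); for a non-compact $M$ one restricts to compactly supported variations $h$.

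First I would record the variation of the volume element. Writing $g_t = g + th + o(t)$ and $v^{g_t} = \sqrt{\det(g_{ij}(t,x))}\,dx$ in local coordinates, differentiating the determinant gives
$$\frac{d}{dt}v^{g_t}\Big|_{t=0} = \tfrac{1}{2}(\operatorname{Tr}_g h)\,v^g = \tfrac{1}{2}\langle g,h\rangle\,v^g.$$
Next I would invoke the well-known formula for the linearization of the scalar curvature, stated with the sign conventions of \eqref{ricci}, \eqref{laplacian}, \eqref{divergence}, namely
$$\frac{d}{dt}S(g_t)\Big|_{t=0} = \Delta(\operatorname{Tr}_g h) + \delta\delta h - \langle \operatorname{Ric}, h\rangle;$$
if one wants this from scratch, it follows by linearizing the Christoffel symbols of $g_t$, then the curvature tensor \eqref{curvature} and the Ricci tensor \eqref{ricci}, and finally taking the $g$-trace, but since it is standard (see \cite{Besse}) I would simply cite it. Differentiating \eqref{eq1} under the integral sign with the product and chain rules then gives
$$\frac{d}{dt}\mathcal{E}_F(g_t)\Big|_{t=0} = \int_M \Big[F'(S)\big(\Delta(\operatorname{Tr}_g h) + \delta\delta h - \langle\operatorname{Ric},h\rangle\big) + \tfrac{1}{2}F(S)\langle g,h\rangle\Big]v^g.$$

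The remaining work is to move every derivative off $h$ by integration by parts. For the first term I would use that $\Delta$ is formally self-adjoint on functions, so $\int_M F'(S)\,\Delta(\operatorname{Tr}_g h)\,v^g = \int_M \langle \Delta(F'(S))\,g, h\rangle v^g$. For the second term I would dualize twice: first $\int_M F'(S)\,\delta(\delta h)\,v^g = \int_M \langle dF'(S), \delta h\rangle v^g$ (the formal adjoint of $\delta$ on $1$-forms being $d$ on functions), and then $\int_M \langle \delta h, dF'(S)\rangle v^g = \int_M \langle h, \delta^*(dF'(S))\rangle v^g$ by \eqref{adjoint of delta}; finally I would observe that $\delta^*(df) = \operatorname{Hess}f$, which is immediate from \eqref{adjoint of delta}, \eqref{hessian} and the symmetry of the Hessian, so that this term equals $\int_M \langle h, \operatorname{Hess}F'(S)\rangle v^g$. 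Collecting the four contributions yields
$$\frac{d}{dt}\mathcal{E}_F(g_t)\Big|_{t=0} = \int_M \big\langle \Delta(F'(S))\,g + \operatorname{Hess}F'(S) - F'(S)\operatorname{Ric} + \tfrac{1}{2}F(S)\,g,\; h\big\rangle v^g,$$
which is precisely $-\int_M \langle E_F(g), h\rangle v^g$ with $E_F(g)$ as in \eqref{eq3}. (As a check, $F(S)=S$ recovers $E_F(g)=\operatorname{Ric}-\tfrac12 Sg$, the Einstein tensor.)

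I do not expect a genuine obstacle here: granting the classical variation formulas, the argument is bookkeeping plus integration by parts. The only points needing care are the sign conventions — the paper's $\Delta$ and $\delta$ both carry a minus sign relative to the analysts' conventions, so the linearization of $S$ and the (self-)adjointness identities must be quoted in the matching form — and the justification of the two integrations by parts, which is where compactness of $M$ (or compact support of $h$) enters.
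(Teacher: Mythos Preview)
Your proposal is correct and follows essentially the same approach as the paper: both start from the classical variation formulas for $v^{g_t}$ and $S_t$ (the paper's Lemma~\ref{lemma1}), apply the chain rule to $F(S_t)$, and then integrate by parts to move the second-order operators from $h$ onto $F'(S)$. The only cosmetic difference is that the paper carries out the integration by parts by explicitly exhibiting each term as a divergence (working in a normal frame and invoking the identity $(\delta T)(Z)=\delta(T(\cdot,Z))+\tfrac12\langle T,\mathcal{L}_Z g\rangle$) before applying the divergence theorem, whereas you invoke the formal adjointness of $\Delta$, $\delta$ and $\delta^*$ directly; the content is the same.
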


\begin{definition}
$E_F(g)$ is called the generalized Einstein tensor (or $F$-Einstein tensor).
\end{definition}

For the proof of Theorem \ref{th1}, we need the following lemma.

\begin{lemma}[\cite{Reto}, \cite{Peter}] \label{lemma1} Let $(M,g)$ be a Riemannian manifold. Then, the differential at $g$, in the direction of $h$, of the volume element and the scalar curvature are given by the following formulas
\begin{equation}\label{eq4}
    \frac{\partial v^{g_t}}{\partial t}\Big|_{t=0}=\frac{1}{2}(\operatorname{Tr} h) v^g=\frac{1}{2}\langle g,h\rangle v^g,
\end{equation}
\begin{equation}\label{eq5}
   \frac{\partial S_{t}}{\partial t}\Big|_{t=0}=\Delta(\operatorname{Tr} h)+\delta(\delta h)-\langle \operatorname{Ric},h\rangle .
\end{equation}
\end{lemma}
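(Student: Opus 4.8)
Both identities are classical, and I would establish them by a direct computation in local coordinates $(x^i)$, denoting by a dot the operation $\frac{\partial}{\partial t}\big|_{t=0}$ and keeping careful track of the two sign conventions fixed above — namely the "positive" Laplacian $\Delta f=-\operatorname{Tr}(\operatorname{Hess}f)$ of \eqref{laplacian} and the divergence $\delta$ with its leading minus sign in \eqref{divergence}. For \eqref{eq4}: in local coordinates $v^{g_t}=\sqrt{\det(g_{ij}(t,x))}\,dx^1\wedge\cdots\wedge dx^n$, so Jacobi's formula $\partial_t\log\det A=\operatorname{tr}(A^{-1}\partial_t A)$ applied to $A=(g_{ij}(t,\cdot))$ gives $\partial_t\sqrt{\det g_t}=\tfrac12\sqrt{\det g_t}\,g_t^{ij}\partial_t g_{ij}$; evaluating at $t=0$ and using $g^{ij}h_{ij}=\operatorname{Tr}h=\langle g,h\rangle$ yields the claim.

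For \eqref{eq5} I would proceed in three sub-steps. First, differentiating the Koszul formula for the Christoffel symbols (equivalently, using that the difference of two Levi-Civita connections is a tensor) gives the standard expression
$$\dot\Gamma^{\,k}_{ij}=\tfrac12\,g^{kl}\big(\nabla_i h_{jl}+\nabla_j h_{il}-\nabla_l h_{ij}\big),$$
a genuine $(1,2)$-tensor. Second, from the definition \eqref{curvature} the linearization of the curvature is $\dot R^{\,l}_{\;ikj}=\nabla_i\dot\Gamma^{\,l}_{kj}-\nabla_k\dot\Gamma^{\,l}_{ij}$ (the remaining connection terms cancel because $\dot\Gamma$ is tensorial); contracting as in \eqref{ricci} gives $\dot{\operatorname{Ric}}_{ij}$ as an explicit linear second-order operator applied to $h$. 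Third, from $S_t=g_t^{ij}\operatorname{Ric}_{ij}(g_t)$ together with $\dot g^{ij}=-h^{ij}$ one gets
$$\dot S=-h^{ij}\operatorname{Ric}_{ij}+g^{ij}\dot{\operatorname{Ric}}_{ij}=-\langle\operatorname{Ric},h\rangle+g^{ij}\dot{\operatorname{Ric}}_{ij}.$$

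It then remains to identify the term $g^{ij}\dot{\operatorname{Ric}}_{ij}$. Tracing the formula for $\dot{\operatorname{Ric}}_{ij}$ produces a multiple of $g^{ij}\nabla_i\nabla_j(\operatorname{Tr}h)$, a multiple of the rough-Laplacian term $g^{kl}\nabla_k\nabla_l h_{ij}$ traced against $g^{ij}$, and terms of the shape $g^{ij}\nabla_i\nabla^l h_{lj}$. Matching these against the definitions \eqref{laplacian} and \eqref{divergence} — noting that $\delta h$ is the $1$-form with components $-\nabla^l h_{li}$ and that $\delta(\delta h)=-\nabla^i(\delta h)_i$ — the three groups assemble precisely into $\Delta(\operatorname{Tr}h)+\delta(\delta h)$, which combined with the previous display gives \eqref{eq5}.

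\textbf{Main obstacle.} The only delicate point is this last identification: because the paper uses the positive Laplacian \eqref{laplacian} and the sign-flipped divergence \eqref{divergence}, one must propagate the signs through the contraction with some care so that the second-derivative terms recombine into exactly $\Delta(\operatorname{Tr}h)+\delta(\delta h)$ rather than a sign-flipped variant, and so that $\langle\operatorname{Ric},h\rangle$ enters with the correct negative coefficient for the curvature conventions \eqref{curvature}–\eqref{ricci}. Everything else is routine tensor calculus.
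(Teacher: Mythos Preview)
Your outline is correct and is precisely the standard derivation one finds in the references the paper cites (M\"uller, Topping): Jacobi's formula for the volume form, the Koszul-formula variation $\dot\Gamma^k_{ij}=\tfrac12 g^{kl}(\nabla_i h_{jl}+\nabla_j h_{il}-\nabla_l h_{ij})$, then $\dot R$, $\dot{\operatorname{Ric}}$, and finally $\dot S=-\langle\operatorname{Ric},h\rangle+g^{ij}\dot{\operatorname{Ric}}_{ij}$, with the last trace rewritten as $\Delta(\operatorname{Tr}h)+\delta(\delta h)$ under the sign conventions \eqref{laplacian}--\eqref{divergence}. There is nothing to compare against here: the paper does \emph{not} prove Lemma~\ref{lemma1} at all --- it is quoted from \cite{Reto,Peter} and used as a black box in the proof of Theorem~\ref{th1} --- so your proposal in fact supplies strictly more than the paper does, and your identification of the sign bookkeeping as the only genuine subtlety is accurate.
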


\begin{proof}[Proof of Theorem \ref{th1}] First note that
\begin{equation}\label{eq6}
\frac{d}{dt}\mathcal{E}_{F}(g_t)\Big|_{t=0}=\int_M \Big[\frac{\partial F(S_{t})}{\partial t}v^{g_t}+F(S_{t})\frac{\partial v^{g_t}}{\partial t}\Big]_{t=0},
\end{equation}
for all $t\in(-\epsilon,\epsilon)$, we have
$$\frac{\partial F(S_{t})}{\partial t}=F'(S_{t})\frac{\partial S_{t}}{\partial t},$$
by the Lemma \ref{lemma1}, we obtain
\begin{eqnarray}\label{eq7}
\frac{\partial F(S_{t})}{\partial t} \Big|_{t=0}
   &=&\nonumber F'(S)\Delta(\operatorname{Tr} h)+F'(S)\delta(\delta h) \\
   & & -F'(S)\langle \operatorname{Ric},h\rangle .
\end{eqnarray}
Calculating in a normal frame at $x \in M$ we have
\begin{eqnarray}\label{eq8}
F'(S)\Delta(\operatorname{Tr} h)
   &=&\nonumber -F'(S)e_i\big(e_i(\operatorname{Tr} h)\big) \\
   &=&\nonumber -e_i\big(F'(S)e_i(\operatorname{Tr} h)\big)+e_i(F'(S))e_i(\operatorname{Tr} h)\\
   &=&\nonumber -e_i\big(F'(S)e_i(\operatorname{Tr} h)\big)+e_i\big(e_i(F'(S))\operatorname{Tr} h\big)\\
   &&-e_i\big(e_i(F'(S))\big)\operatorname{Tr} h,
\end{eqnarray}
so, the first term in the right-hand side of (\ref{eq7}), is given by
\begin{eqnarray}\label{eq9}
F'(S)\Delta(\operatorname{Tr} h)
   &=&\nonumber\delta\big(F'(S)d(\operatorname{Tr} h)\big)-\delta\big((\operatorname{Tr} h) dF'(S)\big)\\
   && +\Delta(F'(S))\langle g,h\rangle .
\end{eqnarray}
If $f\in C^\infty (M)$ and $\alpha\in \Gamma(T^*M)$, then (see \cite{Peter}, \cite{ON})
\begin{equation}\label{eq10}
    \delta(f\alpha)=-\langle df,\alpha\rangle +f\delta \alpha,
\end{equation}
with $\langle df,\alpha\rangle =\alpha(\operatorname{grad} f)$. Applying this formula, gives
\begin{equation}\label{eq11}
    F'(S)\delta(\delta h)=\delta\big(F'(S)\delta h\big)+\langle dF'(S),\delta h\rangle ,
\end{equation}
by using the following formula (see \cite{Peter})
\begin{equation}\label{eq12}
    (\delta T)(Z)=\delta\big(T(\cdot,Z)\big)+\frac{1}{2}\big\langle T,\mathcal{L}_Z g\big\rangle ,
\end{equation}
where $\mathcal{L}_Z g$ is the Lie-derivative of $g $ along $Z\in\Gamma(TM)$ (see \cite{ON}), and
$T\in\Gamma(\odot^2T^*M)$, we get
\begin{eqnarray}\label{eq13}
\langle dF'(S),\delta h\rangle
   &=&\nonumber (\delta h)\big( \operatorname{grad} F'(S)\big)\\
   &=&\nonumber \delta\big(h(\cdot,\operatorname{grad} F'(S))\big)
   +\frac{1}{2}\big\langle h,\mathcal{L}_{\operatorname{grad} F'(S)} g\big\rangle \\
   &=& \delta\big(h(\cdot,\operatorname{grad} F'(S))\big)
   +\big\langle h, \operatorname{Hess} F'(S) \big\rangle ,
\end{eqnarray}
by equations (\ref{eq11}) and (\ref{eq13}), the second term on the left-hand side of (\ref{eq7}) is
\begin{eqnarray}\label{eq14}
F'(S)\delta(\delta h)
   &=&\nonumber \delta\big(F'(S)\delta h\big)
     +\delta\big(h(\cdot,\operatorname{grad} F'(S))\big)\\
   &&+\big\langle h, \operatorname{Hess} F'(S) \big\rangle .
\end{eqnarray}
Substituting (\ref{eq9}) and (\ref{eq14}) in (\ref{eq7}), we obtain
\begin{eqnarray}\label{eq15}
\frac{\partial F(S_{t})}{\partial t} \Big|_{t=0}
   &=&\nonumber \delta\big(F'(S)d(\operatorname{Tr} h)\big)-\delta\big((\operatorname{Tr} h) dF'(S)\big)\\
   &&\nonumber +\Delta(F'(S))\langle g,h\rangle +\delta\big(F'(S)\delta h\big)\\
   &&\nonumber  +\delta\big(h(\cdot,\operatorname{grad} F'(S))\big)
   +\big\langle h, \operatorname{Hess} F'(S) \big\rangle \\
   &&-F'(S)\langle \operatorname{Ric},h\rangle .
\end{eqnarray}
From equation (\ref{eq15}) and the Lemma \ref{lemma1}, we have
\begin{eqnarray}\label{eq16}
\Big[\frac{\partial F(S_{t})}{\partial t}v^{g_t}+F(S_{t})\frac{\partial v^{g_t}}{\partial t}\Big]_{t=0}
&=&\nonumber \Big\{\delta\big(F'(S)d(\operatorname{Tr} h)\big)-\delta\big((\operatorname{Tr} h) dF'(S)\big)\\
   &&\nonumber +\Delta(F'(S))\langle g,h\rangle +\delta\big(F'(S)\delta h\big)\\
   &&\nonumber  +\delta\big(h(\cdot,\operatorname{grad} F'(S))\big)
   +\big\langle h, \operatorname{Hess} F'(S) \big\rangle \\
   &&\nonumber-F'(S)\langle \operatorname{Ric},h\rangle  \Big\}v^{g}+\frac{F(S)}{2}\langle g,h\rangle v^g.\\
\end{eqnarray}
Substituting the formula (\ref{eq16}) in (\ref{eq6}), and consider the divergence theorem (see \cite{baird}),
the Theorem \ref{th1} follows.
\end{proof}

\begin{remark}
Let $X,Y\in\Gamma(TM)$, we have
\begin{eqnarray*}
\operatorname{Hess} F'(S)(X,Y)
   &=& X(Y(F'(S)))-(\nabla_X Y)(F'(S)) \\
   &=& X(F''(S)Y(S))-F''(S)(\nabla_X Y)(S)\\
   &=& X(F''(S))Y(S)+F''(S)X(Y(S))-F''(S)(\nabla_X Y)(S)\\
   &=& F'''(S)X(S)Y(S)+F''(S)(\operatorname{Hess} S)(X,Y).
\end{eqnarray*}
According to this formula,
the $F$-Einstein tensor is given by
\begin{eqnarray}\label{eq20.1}
E_F(g)&=&\nonumber F'(S)\operatorname{Ric}-F''(S)\operatorname{Hess} S-F'''(S)dS\otimes dS\\
&&-\big(F''(S)\Delta S+F'''(S)|\operatorname{grad} S|^2+\frac{1}{2}F(S)\big)g.
\end{eqnarray}
\end{remark}

\begin{remark}
Let $(M,g)$ be a Riemannian manifold, we get the following
\begin{itemize}
  \item If $F(s)=s$, for all $s\in \mathbb{R}$, the $F$-Einstein tensor is given by the formula (see \cite{Besse}, \cite{Reto})
  \begin{eqnarray}\label{eq17}
E_F(g)=E(g)
   &=&\operatorname{Ric}
   -\frac{S}{2}g,
\end{eqnarray}
  is the Einstein tensor.
  \item If $F(s)=s^2$, for all $s\in \mathbb{R}$, the $F$-Einstein tensor is given by (see \cite{Besse}, \cite{calabi1}, \cite{calabi})
  \begin{eqnarray}\label{eq18}
E_F(g)
   &=&2S\operatorname{Ric}-2\operatorname{Hess} S
   -\big(2\Delta S+\frac{S ^2}{2}\big)g.
\end{eqnarray}
\end{itemize}
\end{remark}

From Theorem \ref{th1}, we deduce.

\begin{theorem}[\cite{Felice}, \cite{Sotiriou}]\label{th2}
A Riemannian metric $g$ is a critical point of the $F$-Einstein-Hilbert functional if and only if
\begin{eqnarray}\label{eq20}
F'(S)\operatorname{Ric}-\operatorname{Hess} F'(S)
   -\big(\Delta F'(S)+\frac{1}{2}F(S)\big)g=0,
\end{eqnarray}
where $F:\mathbb{R}\longrightarrow\mathbb{R}$ is a non-constant smooth function.
\end{theorem}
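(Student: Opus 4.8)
The plan is to obtain Theorem \ref{th2} as an immediate consequence of the first variation formula \eqref{eq2} of Theorem \ref{th1}, via the fundamental lemma of the calculus of variations. By definition, $g \in \mathcal{M}$ is a critical point of $\mathcal{E}_F$ if and only if $\frac{d}{dt}\mathcal{E}_F(g_t)\big|_{t=0} = 0$ for every smooth one-parameter variation $(g_t)$ with $g_0 = g$. First I would observe that such variations are, at the level of initial velocities, in bijective correspondence with arbitrary tensors $h \in \Gamma(\odot^2 T^*M)$: given $h$, the family $g_t := g + th$ is a path of Riemannian metrics through $g$ for $|t|$ small, with $(\partial g_t/\partial t)|_{t=0} = h$; conversely every variation produces such an $h$. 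Hence, by \eqref{eq2}, $g$ is critical if and only if $\int_M <E_F(g),h>\,v^g = 0$ for all $h \in \Gamma(\odot^2 T^*M)$ (taken with compact support if $M$ is non-compact, so that the divergence theorem used in Theorem \ref{th1} applies).

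The ``if'' direction is then trivial: if the left-hand side of \eqref{eq20} vanishes identically, i.e. $E_F(g) = 0$, then the integral vanishes for every $h$. For the ``only if'' direction I would use that $E_F(g)$, as written in \eqref{eq3}, is a symmetric $2$-tensor — being assembled from $\operatorname{Ric}$, $\operatorname{Hess} F'(S)$ and $g$, all of which are symmetric — so $E_F(g) \in \Gamma(\odot^2 T^*M)$ is itself an admissible test tensor. Substituting $h = E_F(g)$ into the criticality condition (or $h = \varphi\, E_F(g)$ for arbitrary $\varphi \in C^\infty_c(M)$ with $\varphi \ge 0$, in the non-compact case) yields $\int_M |E_F(g)|^2 v^g = 0$ (respectively $\int_M \varphi\, |E_F(g)|^2 v^g = 0$ for all such $\varphi$); since the integrand is continuous and non-negative, this forces $|E_F(g)|^2 \equiv 0$ on $M$, which is precisely \eqref{eq20}.

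I do not expect a genuine obstacle here: the analytic content is already packaged in Theorem \ref{th1}, and what remains is the standard fundamental-lemma argument. The only mildly delicate point — passing from ``the integral vanishes against every symmetric test tensor'' to ``pointwise vanishing of $E_F(g)$'' — is precisely what the symmetry of $E_F(g)$ (together with localization by bump functions, or compactness of $M$) resolves. The hypothesis that $F$ is non-constant is not needed for this deduction; it only serves to make $\mathcal{E}_F$ a nontrivial functional in the first place.
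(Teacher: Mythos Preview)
Your proposal is correct and matches the paper's approach: the paper simply writes ``From Theorem~\ref{th1}, we deduce'' and states Theorem~\ref{th2} without further argument, so the intended proof is precisely the fundamental-lemma reasoning you spell out. Your added observations (symmetry of $E_F(g)$, the test choice $h=E_F(g)$ or $h=\varphi\,E_F(g)$) are the standard details the paper leaves implicit.
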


By taking traces in (\ref{eq20}), we obtain
\begin{eqnarray}\label{trace critical}
SF'(S)+(1-n)\Delta F'(S)
   -\frac{n}{2}F(S)=0.
\end{eqnarray}

\begin{theorem}\label{th3}
Let $(M,g)$ be a Riemannian manifold. Then, the divergence of the generalized Einstein tensor is zero (that is, $\delta E_F(g)=0$).
\end{theorem}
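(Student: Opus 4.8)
The plan is to compute $\delta E_F(g)$ directly, term by term, from the formula (\ref{eq3}), using only the contracted second Bianchi identity and the elementary product rules for the divergence operator. Abbreviate $\phi:=F'(S)\in C^\infty(M)$, so that
\begin{equation*}
E_F(g)=\phi\,\operatorname{Ric}-\operatorname{Hess}\phi-\Big(\Delta\phi+\tfrac12 F(S)\Big)g .
\end{equation*}
The ingredients I would use are: (i) for $f\in C^\infty(M)$ and a $(0,2)$-tensor $T$, the product rule $\delta(fT)=f\,\delta T-T(\operatorname{grad}f,\cdot)$ — the tensorial analogue of (\ref{eq10}) — which in particular gives $\delta(fg)=-df$ since $\delta g=0$; (ii) the contracted second Bianchi identity, which in the sign convention (\ref{divergence}) reads $\delta\operatorname{Ric}=-\tfrac12\,dS$; (iii) the commutation identity $\delta\operatorname{Hess}\phi=d(\Delta\phi)-\operatorname{Ric}(\operatorname{grad}\phi,\cdot)$; and (iv) the chain rule $dF(S)=F'(S)\,dS=\phi\,dS$.

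Carrying this out, (i) and (ii) give $\delta(\phi\,\operatorname{Ric})=-\tfrac{\phi}{2}\,dS-\operatorname{Ric}(\operatorname{grad}\phi,\cdot)$, while (i) and (iv) give $\delta\big((\Delta\phi+\tfrac12 F(S))g\big)=-d(\Delta\phi)-\tfrac{\phi}{2}\,dS$. Substituting these, together with (iii), into $\delta E_F(g)=\delta(\phi\,\operatorname{Ric})-\delta\operatorname{Hess}\phi-\delta\big((\Delta\phi+\tfrac12 F(S))g\big)$, the two copies of $\operatorname{Ric}(\operatorname{grad}\phi,\cdot)$ cancel, the two copies of $d(\Delta\phi)$ cancel, and the two copies of $\tfrac{\phi}{2}\,dS$ cancel, leaving $\delta E_F(g)=0$. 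As a sanity check, for $F(s)=s$ this reduces to the classical $\delta(\operatorname{Ric}-\tfrac{S}{2}g)=0$, cf. (\ref{eq17}).

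The one step that needs genuine care is (iii), and with it the sign bookkeeping forced by the conventions $\Delta f=-\operatorname{Tr}\operatorname{Hess}f$ and $(\delta\alpha)(\cdots)=-(\nabla_{e_i}\alpha)(e_i,\cdots)$: one must confirm that the curvature term produced when commuting the two covariant derivatives in $\delta\operatorname{Hess}\phi$ enters with exactly the sign that cancels the $-\operatorname{Ric}(\operatorname{grad}\phi,\cdot)$ coming from $\delta(\phi\,\operatorname{Ric})$. I would establish (iii) once, carefully, in a normal frame at a point: writing $(\nabla_{e_i}\operatorname{Hess}\phi)(e_i,X)=e_i\big((\operatorname{Hess}\phi)(e_i,X)\big)-(\operatorname{Hess}\phi)(e_i,\nabla_{e_i}X)$, using symmetry of the Hessian to move $X$ into the first slot, and then commuting $\nabla_{e_i}$ past $\nabla_X$ on $\operatorname{grad}\phi$ to produce the term $-\operatorname{Ric}(\operatorname{grad}\phi,X)$ and a gradient of $\operatorname{Tr}\operatorname{Hess}\phi=-\Delta\phi$. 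Equivalently one may note $\operatorname{Hess}\phi=\delta^*(d\phi)$ and quote the standard Weitzenb\"ock identity for $\delta\delta^*$ on $1$-forms.

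Finally, it is worth recording a shortcut that avoids curvature computations altogether, using Theorem \ref{th1} and the diffeomorphism invariance of $\mathcal{E}_F$. For any compactly supported $Z\in\Gamma(TM)$ with flow $(\varphi_t)$, put $g_t:=\varphi_t^*g$; then $g_0=g$ and $h:=\frac{d}{dt}\big|_{t=0}g_t=\mathcal{L}_Zg=2\,\delta^*Z^\flat$ with $Z^\flat:=g(Z,\cdot)$. Since $S_{\varphi_t^*g}=S_g\circ\varphi_t$ and $v^{\varphi_t^*g}=\varphi_t^*v^g$, the change-of-variables formula gives $\mathcal{E}_F(g_t)=\mathcal{E}_F(g)$ for all $t$, so $\frac{d}{dt}\big|_{t=0}\mathcal{E}_F(g_t)=0$. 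By (\ref{eq2}) and the fact that $\delta^*$ is the formal adjoint of $\delta$, this says $0=-\int_M\langle E_F(g),2\,\delta^*Z^\flat\rangle\,v^g=-2\int_M\langle\delta E_F(g),Z^\flat\rangle\,v^g$; since $Z$ is arbitrary, $\delta E_F(g)=0$ pointwise. I would present the direct computation as the proof proper and mention this variational derivation as confirmation.
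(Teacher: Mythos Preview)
Your primary argument is correct and is, at the level of ideas, the same computation the paper carries out: both use the contracted second Bianchi identity (your (ii), the paper's (\ref{eq26})), the Ricci commutation formula for $\delta\operatorname{Hess}\phi$ (your (iii), the paper's (\ref{eq25}) and (\ref{eq27})), and the product rule for $\delta$. The difference is organizational: the paper expands $\delta E_F(g)(X)$ in one pass in a normal frame and then reads off the three cancellations, whereas you isolate the three tensorial identities first and apply them term by term, which makes the sign bookkeeping more transparent.

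The variational ``shortcut'' you append is a genuinely different route that the paper does not give: it derives $\delta E_F(g)=0$ from the diffeomorphism invariance of $\mathcal{E}_F$ and Theorem~\ref{th1}, bypassing all curvature computations. This is the cleanest conceptual explanation of the result (it makes clear that the identity is the Noether/Bianchi-type consequence of coordinate invariance, for \emph{any} $F$), at the cost of requiring $M$ to be closed (or the variation compactly supported) so that the integration by parts in (\ref{eq2}) and the adjointness $\langle E_F(g),\delta^*Z^\flat\rangle_{L^2}=\langle\delta E_F(g),Z^\flat\rangle_{L^2}$ are valid. Presenting the direct computation as the proof and the variational argument as a remark, as you propose, is a good choice.
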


\begin{proof}
Let $F:\mathbb{R}\longrightarrow\mathbb{R}$ be a smooth function, calculating in a normal frame $\{e_i\}$ at $x\in M$, with $X=e_j$, we have
\begin{equation}\label{eq22}
    \delta E_F(g) (X)
   = -(\nabla_{e_i}E_F(g))(e_i,X)
   = -e_i\big(E_F(g)(e_i,X)\big),
\end{equation}
by the definitions of generalized Einstein tensor, and the Hessian tensor, we get
\begin{eqnarray}\label{eq23}
    E_F(g)(e_i,X)&=&\nonumber F'(S)\operatorname{Ric}(e_i,X)-g(\nabla_{e_i}\operatorname{grad} F'(S),X)\\
   &&-\big(\Delta F'(S)+\frac{1}{2}F(S)\big)g(e_i,X),
\end{eqnarray}
substituting (\ref{eq23}) in (\ref{eq22}), and consider the definition of gradient operator, we obtain
\begin{eqnarray*}
\delta E_F(g) (X)
   &=&-\operatorname{Ric}(\operatorname{grad}F'(S),X)-F'(S)e_i\big(\operatorname{Ric}(e_i,X)\big)\\
   &&+g(\nabla_{e_i}\nabla_X\operatorname{grad}F'(S),e_i)
     +X(\Delta F'(S))+\frac{1}{2}X(F(S)),
\end{eqnarray*}
by the definitions of the divergence, and the curvature tensor, with $[e_i,X]=0$, we conclude that
\begin{eqnarray}\label{eq24}
\delta E_F(g) (X)
   &=&\nonumber-\operatorname{Ric}(\operatorname{grad}F'(S),X)+F'(S)(\delta \operatorname{Ric})(X)\\
   &&\nonumber+g(R(e_i,X)\operatorname{grad}F'(S),e_i)
   +g(\nabla_X\nabla_{e_i}\operatorname{grad}F'(S),e_i)\\
     &&+X(\Delta F'(S))+\frac{1}{2}X(F(S)),
\end{eqnarray}
note that
\begin{equation}\label{eq25}
   \operatorname{Ric}(\operatorname{grad}F'(S),X)=g(R(e_i,X)\operatorname{grad}F'(S),e_i),
\end{equation}
\begin{equation}\label{eq26}
  F'(S)(\delta \operatorname{Ric})(X)=-\frac{1}{2}X(F(S))=-\frac{1}{2} F'(S)X(S),
\end{equation}
\begin{equation}\label{eq27}
    g(\nabla_X\nabla_{e_i}\operatorname{grad}F'(S),e_i)=-X(\Delta F'(S)).
\end{equation}
Substituting the formulas (\ref{eq25}), (\ref{eq26}) and (\ref{eq27}) in (\ref{eq24}), the Theorem \ref{th3} follows.
\end{proof}

\begin{remark}\label{re2.4}\quad
\begin{itemize}
  \item If $E_F(g)=f g$ for some function $f$ on $M$, then $f$ is constant function on $M$ (because $\delta E_F(g)=0$).
  \item The condition $E_F(g)=\lambda g$ is equivalent to
  \begin{equation}\label{einstein metric}
   F'(S)\operatorname{Ric}-\operatorname{Hess} F'(S)=\mu g,
  \end{equation}
  for some function $\mu$ on $M$,
  it is also equivalent to
  \begin{eqnarray}\label{einstein metric 2}
 F'(S)\operatorname{Ric}-F''(S)\operatorname{Hess} S-F'''(S)dS\otimes dS=\mu g,
\end{eqnarray}
  (see equation (\ref{eq20.1})).
  \item If $F(s)=s$, for all $s\in\mathbb{R}$, then $E_F(g)=\lambda g$ if $(M,g)$ is Einstein manifold,
that is $\operatorname{Ric}=\mu g$ for some constant $\mu$ (see \cite{Besse}).
\end{itemize}
\end{remark}

\begin{example}
Let $M=(0,\infty)\times\mathbb{R}^3$ equipped with the Riemannian metric $g=dt^2+t^2(dx^2+dy^2+dz^2)$.
Let $F(s)=s^\alpha$ for some constant $\alpha$. Then, $E_F(g)=0$ if and only if $\alpha=\frac{1\pm\sqrt{3}}{2}$.
\end{example}

\begin{example}
Let $M=\mathbb{S}^{n}\subset\mathbb{R}^{n+1}$ and $F:\mathbb{R}\longrightarrow\mathbb{R}$ a non-constant smooth function.
Then, the induced Riemannian metric $g^{\mathbb{S}^{n}}$ is a critical point of the $F$-Einstein-Hilbert functional
if and only if $F(s_0)=0$ and $F'(s_0)=0$ where $s_0=n(n-1)$ is the scalar curvature of $(\mathbb{S}^{n},g^{\mathbb{S}^{n}})$.
\end{example}

\begin{remark}
The previous examples prove the following results; There is no equivalence between $E_F(g)=0$ and $E(g)=0$ where $F$
is a non-constant smooth function. There exist Riemannian Einstein metrics which are critical points of the $F$-Einstein-Hilbert functional where $F(s)\neq s$.
\end{remark}

\section{The second variation of $\mathcal{E}_{F}$}
Let $M$ be a closed orientable manifold. We denote by
$$\mathcal{M}_{c}=\{g\in \mathcal{M}\,|\, \operatorname{Vol}(M,g)=\int_M v^g=c\},$$
for some constant $c> 0$. This is a submanifold of $\mathcal{M}$ of codimension $1$, and its tangent space at $g\in\mathcal{M}_{c}$ is given by
$$T_g\mathcal{M}_{c}=\{T\in \Gamma(\odot^2 T^*M)\,|\,\int_M\langle g,T\rangle v^g=0\}.$$
 A Riemannian metric $g$ is a critical point of $\mathcal{E}_{F}|_{\mathcal{M}_{c}}$ if and only if $E_F(g)$ is orthogonal to $T_g\mathcal{M}_{c}$, that is $E_F(g)=\lambda g$ for some constant $\lambda$.
In the following Theorem, we calculate the second derivative of $\mathcal{E}_{F}(g_t)$ at $t=0$ where $(g_t)$ ($-\epsilon<t<\epsilon$) is a smooth one-parameter variation of such Riemannian metric $g$ which enables us to know the extremality properties of $\mathcal{E}_{F}$. Write
\begin{equation}\label{eq32}
    h=\frac{\partial g_t}{\partial t}\Big|_{t=0},\quad k=\frac{\partial^2 g_t}{\partial t^2}\Big|_{t=0},
\end{equation}
then $h,k\in\Gamma(\odot^2T^*M)$. Under the notation above we have the following.

\begin{theorem}\label{th2.3}
Let $(M, g)$ be a closed orientable Riemannian manifold with volume $c$. Suppose that $E_F(g)=\lambda g$, for some constant $\lambda$, then the second variation of $\mathcal{E}_{F}|_{\mathcal{M}_{c}}$ at $g$ in the direction of $h$ is given by
\begin{equation*}
    \frac{d^2}{dt^2}\mathcal{E}_{F}(g_t)\Big|_{t=0}=\int_M \big\langle T_0(h)+T_1(h),h\big\rangle v^{g},
\end{equation*}
where $T_0(h)$, $T_1(h)$ are defined by
\begin{eqnarray*}
T_0(h)
   &=& -\frac{F'(S)}{2}\nabla^*\nabla h
       +F'(S)\overset{\circ}{R}h
       +F'(S)\delta^*(\delta h)
       +\frac{1}{2}F'(S)\operatorname{Hess}(\operatorname{Tr}h)\\
   & & +\frac{F'(S)}{2}\big[\Delta(\operatorname{Tr} h)
       +\delta(\delta h)\big]g
       -\frac{1}{2}\big[\lambda
       +\frac{1}{2}F(S)\big](\operatorname{Tr} h)g,
\end{eqnarray*}
\begin{eqnarray*}
  T_1(h)
   &=& -f\operatorname{Ric}+\operatorname{Hess}f+(\Delta f) g
     - h(\nabla_\cdot \operatorname{grad}F'(S),\cdot)^\sigma
    -(\nabla_\cdot h)(\cdot,\operatorname{grad}F'(S))^\sigma\\
    &&+\frac{1}{2}\nabla_{\operatorname{grad}F'(S)}h
    -\langle \delta h+\frac{1}{2}d(\operatorname{Tr} h),dF'(S)\rangle g
    -\frac{1}{2}(\Delta F'(S))(\operatorname{Tr} h)g\\
    &&+\frac{1}{2}\langle \operatorname{Hess}F'(S),h\rangle g,
\end{eqnarray*}
and $f=F''(S)\big[\Delta(\operatorname{Tr} h)+\delta(\delta h)-\langle \operatorname{Ric},h\rangle \big]$.
\end{theorem}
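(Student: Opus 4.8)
\section*{Proof proposal}

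The plan is to differentiate the first variation formula \eqref{eq2}--\eqref{eq3} of Theorem~\ref{th1} once more. That formula is valid for an arbitrary metric and an arbitrary variation direction, so it may be applied at every time $t$: writing $h_t=\partial_t g_t$, so that $h_0=h$ and $\partial_t h_t|_{t=0}=k$, for each fixed $t$ the curve $s\mapsto g_{t+s}$ is a variation of $g_t$ with velocity $h_t$, whence $\frac{d}{dt}\mathcal{E}_{F}(g_t)=-\int_M\langle E_F(g_t),h_t\rangle_{g_t}\,v^{g_t}$. I would then differentiate this at $t=0$ with the product rule, keeping track of the four $t$-dependent ingredients: the $F$-Einstein tensor, whose $t$-derivative at $0$ I denote $\dot{E}_F(h)$ (linear in $h$); the velocity $h_t$, whose derivative is $k$; the induced metric $\langle\cdot,\cdot\rangle_{g_t}$ on symmetric $2$-tensors, for which $\frac{d}{dt}\langle A,B\rangle_{g_t}|_{t=0}=-2\langle A\circ h,B\rangle$ whenever $A,B$ are held fixed; and the volume element, with $\partial_t v^{g_t}|_{t=0}=\frac{1}{2}(\operatorname{Tr}h)v^g$ by Lemma~\ref{lemma1}. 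Invoking $E_F(g)=\lambda g$ (so $\langle E_F(g),k\rangle=\lambda\operatorname{Tr}k$, $E_F(g)\circ h=\lambda h$ and $\langle E_F(g),h\rangle=\lambda\operatorname{Tr}h$) this produces
\begin{equation*}
\frac{d^2}{dt^2}\mathcal{E}_{F}(g_t)\Big|_{t=0}=-\int_M\Big[\langle\dot{E}_F(h),h\rangle+\lambda\operatorname{Tr}k-2\lambda|h|^2+\frac{1}{2}\lambda(\operatorname{Tr}h)^2\Big]v^g.
\end{equation*}

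Next I would eliminate $k$. Differentiating the identity $\partial_t v^{g_t}=\frac{1}{2}(\operatorname{Tr}_{g_t}h_t)\,v^{g_t}$ once more gives $\partial_t^2 v^{g_t}|_{t=0}=\frac{1}{2}\big(\operatorname{Tr}k-|h|^2+\frac{1}{2}(\operatorname{Tr}h)^2\big)v^g$, and since $\operatorname{Vol}(M,g_t)=c$ for all $t$, integrating over $M$ gives $\int_M\operatorname{Tr}k\,v^g=\int_M\big(|h|^2-\frac{1}{2}(\operatorname{Tr}h)^2\big)v^g$. Substituting this into the displayed formula makes the $\operatorname{Tr}k$-, $|h|^2$- and $(\operatorname{Tr}h)^2$-contributions collapse into a single term, leaving $\frac{d^2}{dt^2}\mathcal{E}_{F}(g_t)|_{t=0}=\int_M\langle-\dot{E}_F(h)+\lambda h,h\rangle v^g$, so that it suffices to prove the pointwise identity $-\dot{E}_F(h)+\lambda h=T_0(h)+T_1(h)$.

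To compute $\dot{E}_F(h)$ I would linearize $E_F(g)=F'(S)\operatorname{Ric}-\operatorname{Hess}F'(S)-\big(\Delta F'(S)+\frac{1}{2}F(S)\big)g$ term by term, using: the formula $\dot S=\Delta(\operatorname{Tr}h)+\delta(\delta h)-\langle\operatorname{Ric},h\rangle$ of Lemma~\ref{lemma1}, so that the $t$-derivatives of $F'(S)$ and $F(S)$ at $0$ are $F''(S)\dot S=f$ and $F'(S)\dot S$; the classical linearization of the Ricci tensor, $\dot{\operatorname{Ric}}(h)=\frac{1}{2}\Delta_L h-\delta^*(\delta h)-\frac{1}{2}\operatorname{Hess}(\operatorname{Tr}h)$ with $\Delta_L$ as in \eqref{Lichnerowicz Laplacian} (see \cite{Besse}); the variation of the Christoffel symbols, $\dot\Gamma^k_{ij}=\frac{1}{2}g^{kl}(\nabla_i h_{jl}+\nabla_j h_{il}-\nabla_l h_{ij})$, which governs the linearizations of $\operatorname{Hess}$ and of $\Delta$ applied to the $t$-dependent function $F'(S_t)$; and the elementary rule that the variation of $u\,g$, for a scalar $u$, is $\dot u\,g+u\,h$. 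Expanding $\Delta_L h$ via \eqref{Lichnerowicz Laplacian} and recalling that it is $-\dot{E}_F(h)$ we want: the contribution $-F'(S)\dot{\operatorname{Ric}}(h)$ reproduces exactly the first four terms of $T_0(h)$ together with a spurious $-\frac{1}{2}F'(S)(\operatorname{Ric}\circ h+h\circ\operatorname{Ric})$; the contributions $-f\operatorname{Ric}$, $\operatorname{Hess}f$, $(\Delta f)g$ and the three $\operatorname{grad}F'(S)$-terms of $T_1(h)$ come from $-F''(S)\dot S\,\operatorname{Ric}$ and from the linearizations of $\operatorname{Hess}F'(S)$, $\Delta F'(S)$ and $F(S)$; and the remaining $g$-proportional scalar terms of $T_0(h)$ and of $T_1(h)$ come from $\frac{1}{2}F'(S)\dot S\,g$ and from the linearization of $(\Delta F'(S)+\frac{1}{2}F(S))g$.

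The step I expect to be the main obstacle is the final reconciliation. After the grouping above, a residue survives — namely $-\frac{1}{2}F'(S)(\operatorname{Ric}\circ h+h\circ\operatorname{Ric})$, the term $\big(\Delta F'(S)+\frac{1}{2}F(S)\big)h$, the extra $\lambda h$, the symmetrized tensor $\frac{1}{2}(\operatorname{Hess}F'(S)\circ h+h\circ\operatorname{Hess}F'(S))=h(\nabla_\cdot\operatorname{grad}F'(S),\cdot)^\sigma$, and several multiples of $\langle\operatorname{Ric},h\rangle g$, $\langle\operatorname{Hess}F'(S),h\rangle g$ and $(\operatorname{Tr}h)g$ — and one must show that this residue vanishes. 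This is exactly where the hypothesis enters, in the equivalent form recorded in Remark~\ref{re2.4}: $E_F(g)=\lambda g$ amounts to $F'(S)\operatorname{Ric}-\operatorname{Hess}F'(S)=\mu g$ with $\mu=\lambda+\Delta F'(S)+\frac{1}{2}F(S)$. Composing this tensor identity with $h$ on the left and on the right and adding gives $2\mu h=F'(S)(\operatorname{Ric}\circ h+h\circ\operatorname{Ric})-(\operatorname{Hess}F'(S)\circ h+h\circ\operatorname{Hess}F'(S))$, while taking the inner product with $h$ gives $\mu\operatorname{Tr}h=F'(S)\langle\operatorname{Ric},h\rangle-\langle\operatorname{Hess}F'(S),h\rangle$; inserting these two relations cancels the residue completely and establishes $-\dot{E}_F(h)+\lambda h=T_0(h)+T_1(h)$, hence the theorem. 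Throughout, care with the sign conventions for $\delta$, $\delta^*$, $\Delta$, $\nabla^*\nabla$ and $\Delta_L$ fixed in Section~1 is essential; observe also that, because the first variation identity carries no boundary term, no divergence terms are generated and the identity holds pointwise for the integrands rather than only after integration over $M$.
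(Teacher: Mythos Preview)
Your proposal is correct and follows essentially the same route as the paper: differentiate the first-variation identity, use Lemma~\ref{lemma2} and Lemma~\ref{lemma1} together with $E_F(g)=\lambda g$ and the volume constraint to dispose of the $k$-, $|h|^2$- and $(\operatorname{Tr}h)^2$-contributions, linearize $E_F$ term by term via Lemma~\ref{lemma3} and the Ricci variation formula, and then invoke the hypothesis in the form $F'(S)\operatorname{Ric}-\operatorname{Hess}F'(S)=\mu g$ (the paper's equations (\ref{eq46})--(\ref{eq47})) to cancel the residual terms. Your reduction to the pointwise identity $-\dot{E}_F(h)+\lambda h=T_0(h)+T_1(h)$ is a slightly cleaner packaging of the paper's equation (\ref{eq48}), but the argument is the same.
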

For the proof of Theorem \ref{th2.3}, we need the following lemmas.

\begin{lemma}\label{lemma2}
Let $T_t,Q_t\in\Gamma(\odot^2 T^*M)$ all dependent of time $t\in(-\epsilon,\epsilon)$ with $T_0=T$ and $Q_0=Q$. Then
\begin{equation*}
\frac{\partial}{\partial t}\Big|_{t=0}\big\langle T_t,Q_t\big\rangle _t=\big\langle \frac{\partial T_t}{\partial t}\Big|_{t=0},Q\big\rangle
+\big\langle T,\frac{\partial Q_t}{\partial t}\Big|_{t=0}\big\rangle
-2\big\langle T,h\circ Q\big\rangle ,
\end{equation*}
where $\langle ,\rangle _t$ is the induced Riemannian metric (with respect to $g_t$) on $\otimes^2T^*M$.
\end{lemma}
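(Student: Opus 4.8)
\textbf{Proof strategy for Lemma~\ref{lemma2}.} The identity is pointwise in $x$ and the differentiation is in the parameter $t$ only, so the plan is to fix a point $x\in M$, fix a local frame $\{e_i\}$ which is orthonormal for $g=g_0$ and kept independent of $t$, and differentiate the component expression of the induced inner product. With respect to such a frame one has, for any $(0,2)$-tensors $A,B$,
\begin{equation*}
\langle A,B\rangle_t=g_t^{ik}\,g_t^{jl}\,A_{ij}\,B_{kl},
\end{equation*}
where $(g_t^{ik})$ denotes the inverse matrix of $(g_t(e_i,e_k))$ and $g_0^{ik}=\delta^{ik}$. The first step is simply to differentiate
\begin{equation*}
\langle T_t,Q_t\rangle_t=g_t^{ik}\,g_t^{jl}\,(T_t)_{ij}\,(Q_t)_{kl}
\end{equation*}
by the Leibniz rule and to evaluate at $t=0$. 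Of the four resulting terms, the two carrying $\partial_t(T_t)_{ij}|_{t=0}$ and $\partial_t(Q_t)_{kl}|_{t=0}$, contracted against $\delta^{ik}\delta^{jl}$, assemble directly into $\big\langle\tfrac{\partial T_t}{\partial t}\big|_{t=0},Q\big\rangle$ and $\big\langle T,\tfrac{\partial Q_t}{\partial t}\big|_{t=0}\big\rangle$; the remaining two carry $\partial_t(g_t^{ik})|_{t=0}$ and $\partial_t(g_t^{jl})|_{t=0}$.

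The second step is the first variation of the inverse metric. Differentiating the identity $g_t^{ik}\,g_t(e_k,e_j)=\delta^i_j$ at $t=0$ and using $\partial_t g_t(e_k,e_j)|_{t=0}=h(e_k,e_j)=h_{kj}$ yields the standard formula $\partial_t(g_t^{ik})|_{t=0}=-h_{ik}$ in our frame. Substituting it, the two remaining terms become (summation over repeated indices understood)
\begin{equation*}
-h_{ik}\,T_{ij}\,Q_{kj}\qquad\text{and}\qquad -h_{jl}\,T_{ij}\,Q_{il}.
\end{equation*}

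The final step is to recognise each of these as $-\langle T,h\circ Q\rangle$. From the definition \eqref{composition} together with the symmetry of $Q$ one has $(h\circ Q)(e_a,e_b)=h_{ai}Q_{bi}=h_{ai}Q_{ib}$, so $\langle T,h\circ Q\rangle=T_{ab}\,h_{ai}\,Q_{ib}$. Since $h,T,Q$ are symmetric $(0,2)$-tensors, any complete pairwise contraction of the three of them (each tensor contributing one index to each of the other two) is independent of which slots are paired — equivalently, the trace of a product of three symmetric matrices is unchanged by every permutation of the factors. Both $-h_{ik}T_{ij}Q_{kj}$ and $-h_{jl}T_{ij}Q_{il}$ are, after relabelling indices (and using the symmetry of $Q$, respectively of $T$), such complete contractions of $h,T,Q$ multiplied by $-1$, and so is $-\langle T,h\circ Q\rangle$; hence the three coincide. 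Collecting the four contributions gives
\begin{equation*}
\frac{\partial}{\partial t}\Big|_{t=0}\langle T_t,Q_t\rangle_t=\Big\langle\frac{\partial T_t}{\partial t}\Big|_{t=0},Q\Big\rangle+\Big\langle T,\frac{\partial Q_t}{\partial t}\Big|_{t=0}\Big\rangle-2\langle T,h\circ Q\rangle .
\end{equation*}

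I expect the only delicate point to be the last identification. Producing the two metric-variation terms is routine (the Leibniz rule plus the formula $\partial_t(g_t^{-1})=-g^{-1}(\partial_t g_t)g^{-1}$), but collapsing both of them into the single quantity $\langle T,h\circ Q\rangle$ genuinely uses the symmetry of all three tensors $h,T,Q$ (which holds here because $T_t,Q_t\in\Gamma(\odot^2T^*M)$ and $h=\partial_t g_t|_{t=0}$ is the variation of a metric), not merely the symmetry of $T$ and $Q$ taken individually.
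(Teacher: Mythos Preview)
Your argument is correct and follows essentially the same route as the paper's own proof: write the inner product in components, apply the Leibniz rule, use $\partial_t(g_t^{-1})|_{t=0}=-g^{-1}hg^{-1}$, and identify the two metric-variation terms with $-2\langle T,h\circ Q\rangle$ via the symmetry of $T$, $Q$ and $h$. The only cosmetic difference is that you work in a $g_0$-orthonormal frame (so all metrics become $\delta$'s), whereas the paper keeps general coordinates with explicit $g^{ij}$'s; your discussion of why both cross terms equal $-\langle T,h\circ Q\rangle$ is in fact more explicit than the paper's.
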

\begin{proof}
We have
    $$\big\langle T_t,Q_t\big\rangle _t=T_t ^{ij} Q_t ^{ab} g_t ^{ia} g_t ^{jb},$$
so that
\begin{eqnarray*}
\frac{\partial}{\partial t}\Big|_{t=0}\big\langle T_t,Q_t\big\rangle _t
   &=& \frac{\partial T_t ^{ij}}{\partial t}\Big|_{t=0} Q ^{ab} g ^{ia} g ^{jb}
    +T ^{ij} \frac{\partial Q_t ^{ab}}{\partial t}\Big|_{t=0} g ^{ia} g ^{jb} \\
   & &  +T ^{ij} Q ^{ab} \frac{\partial g_t ^{ia}}{\partial t}\Big|_{t=0} g ^{jb}
    +T ^{ij} Q ^{ab} g ^{ia} \frac{\partial g_t ^{jb}}{\partial t}\Big|_{t=0},
\end{eqnarray*}
since $\frac{\partial g_t ^{ia}}{\partial t}\big|_{t=0}=-g^{iu}g^{av}h_{uv}$ and $\frac{\partial g_t ^{jb}}{\partial t}\big|_{t=0}=-g^{ju}g^{bv}h_{uv}$ (see \cite{Reto}), we get
\begin{eqnarray*}
\frac{\partial}{\partial t}\Big|_{t=0}\big\langle T_t,Q_t\big\rangle _t
   &=& \big\langle \frac{\partial T_t}{\partial t}\Big|_{t=0},Q\big\rangle
+\big\langle T,\frac{\partial Q_t}{\partial t}\Big|_{t=0}\big\rangle \\
   & &  -T ^{ij} Q ^{ab} g^{iu}g^{av}h_{uv} g ^{jb}
    -T ^{ij} Q ^{ab} g ^{ia} g^{ju}g^{bv}h_{uv},
\end{eqnarray*}
note that
\begin{eqnarray*}
-T ^{ij} Q ^{ab} g^{iu}g^{av}h_{uv} g ^{jb}
    -T ^{ij} Q ^{ab} g ^{ia} g^{ju}g^{bv}h_{uv}
   &=& -2T ^{ij} Q ^{ab} g^{iu}g^{av}h_{uv} g ^{jb},
\end{eqnarray*}
on the other hand
\begin{eqnarray*}
 -2\langle T,h\circ Q\rangle
   &=& -2T^{ij}(h\circ Q)^{ub}g^{iu}g^{jb} \\
   &=&  -2T ^{ij} g^{av}h_{uv} Q ^{ab} g^{iu}g ^{jb}.
\end{eqnarray*}
\end{proof}

\begin{lemma}\label{lemma3} Let $(f_t)$ $(-\epsilon<t<\epsilon)$ be a time dependent family of smooth functions on $M$ with $f_0=f$. Then,
the first variation of the Hessian and the Laplacian are given by
\begin{eqnarray*}
\frac{\partial \operatorname{Hess}_t f_t}{\partial t}\Big|_{t=0}
   &=&  \operatorname{Hess}\Big(\frac {\partial f_t}{\partial t}\Big|_{t=0}\Big)-(\nabla_{\cdot}h)(\cdot,\operatorname{grad} f)^\sigma
   +\frac{1}{2}\nabla_{\operatorname{grad} f} h,
\end{eqnarray*}
\begin{eqnarray*}
\frac{\partial \Delta_t f_t}{\partial t}\Big|_{t=0}
   &=&\nonumber \Delta\Big(\frac {\partial f_t}{\partial t}\Big|_{t=0}\Big)
   -\big\langle \delta h+\frac{1}{2}d(\operatorname{Tr}h),df\big\rangle +\big\langle \operatorname{Hess} f,h\big\rangle ,
\end{eqnarray*}
where $\operatorname{Hess}_t f_t$ (resp. $\Delta_t f_t$) is the Hessian (resp. Laplacian) of $f_t$ with respect to the metric $g_t$.
\end{lemma}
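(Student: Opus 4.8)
The plan is to differentiate the intrinsic formulas for the Hessian and the Laplacian, the $t$-dependence entering only through the Levi--Civita connection $\nabla^{t}$ of $g_{t}$ (and, for the Laplacian, also through the metric trace $\operatorname{Tr}_{g_{t}}$). Write $\dot f=\frac{\partial f_{t}}{\partial t}\big|_{t=0}$ and $\dot\nabla=\frac{\partial}{\partial t}\big|_{t=0}\nabla^{t}$; since the difference of two linear connections is a tensor, $\dot\nabla$ is a $(1,2)$-tensor, symmetric because each $\nabla^{t}$ is torsion-free. Starting from $(\operatorname{Hess}_{t}f_{t})(X,Y)=X(Y(f_{t}))-(\nabla^{t}_{X}Y)(f_{t})$ with $X,Y\in\Gamma(TM)$ fixed and differentiating at $t=0$, the first term contributes $X(Y(\dot f))$ and the second contributes $(\nabla_{X}Y)(\dot f)+(\dot\nabla_{X}Y)(f)$, hence $\frac{\partial}{\partial t}\big|_{t=0}(\operatorname{Hess}_{t}f_{t})(X,Y)=(\operatorname{Hess}\dot f)(X,Y)-(\dot\nabla_{X}Y)(f)$. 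The key input is the classical variation-of-connection identity $2g(\dot\nabla_{X}Y,Z)=(\nabla_{X}h)(Y,Z)+(\nabla_{Y}h)(X,Z)-(\nabla_{Z}h)(X,Y)$, which I would obtain by differentiating the Koszul formula for $g_{t}$ at $t=0$ and rewriting $X(g_{t}(Y,Z))$, etc., via $\nabla h$ together with torsion-freeness. Taking $Z=\operatorname{grad}f$ turns $(\dot\nabla_{X}Y)(f)$ into $\frac12\big[(\nabla_{X}h)(Y,\operatorname{grad}f)+(\nabla_{Y}h)(X,\operatorname{grad}f)\big]-\frac12(\nabla_{\operatorname{grad}f}h)(X,Y)$, and recognizing the symmetrized bracket as $\big((\nabla_{\cdot}h)(\cdot,\operatorname{grad}f)\big)^{\sigma}(X,Y)$ in the notation of (\ref{sigma}) gives the first formula.

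For the Laplacian I would write $\Delta_{t}f_{t}=-\operatorname{Tr}_{g_{t}}(\operatorname{Hess}_{t}f_{t})=-g_{t}^{ij}(\operatorname{Hess}_{t}f_{t})_{ij}$ and differentiate by the product rule. Using $\frac{\partial}{\partial t}\big|_{t=0}g_{t}^{ij}=-h^{ij}$ (already invoked in the proof of Lemma \ref{lemma2}), the variation of the inverse metric yields $h^{ij}(\operatorname{Hess}f)_{ij}=<\operatorname{Hess}f,h>$, while the variation of the Hessian itself yields $-\operatorname{Tr}$ of the first formula, namely $\Delta\dot f+\operatorname{Tr}\big((\nabla_{\cdot}h)(\cdot,\operatorname{grad}f)^{\sigma}\big)-\frac12\operatorname{Tr}(\nabla_{\operatorname{grad}f}h)$. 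It then remains to evaluate the two traces in an orthonormal frame: $\operatorname{Tr}\big((\nabla_{\cdot}h)(\cdot,\operatorname{grad}f)^{\sigma}\big)=\sum_{i}(\nabla_{e_{i}}h)(e_{i},\operatorname{grad}f)=-(\delta h)(\operatorname{grad}f)=-<\delta h,df>$ by the definition (\ref{divergence}) of $\delta$, and $\operatorname{Tr}(\nabla_{\operatorname{grad}f}h)=\nabla_{\operatorname{grad}f}(\operatorname{Tr}h)=<d(\operatorname{Tr}h),df>$ because $\nabla g=0$. Collecting these pieces reproduces the second formula.

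The only genuinely delicate point is the derivation of the variation-of-connection identity for $\dot\nabla$ and keeping the symmetrizations straight when passing to $T^{\sigma}$-notation; after that the argument is bookkeeping with the definitions of $\delta$, $\operatorname{Tr}$ and $<\,,\,>$ recalled in Section 1. I would carry out the connection-variation computation and the trace identities in a normal frame at a point, which removes the Christoffel cross-terms and makes the identification with $\delta h$ and $d(\operatorname{Tr}h)$ immediate.
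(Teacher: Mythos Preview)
Your proposal is correct and follows essentially the same route as the paper: differentiate the coordinate-free definition of the Hessian, insert the standard variation-of-connection identity $2g(\dot\nabla_X Y,Z)=(\nabla_X h)(Y,Z)+(\nabla_Y h)(X,Z)-(\nabla_Z h)(X,Y)$ (which the paper quotes from \cite{Besse} rather than re-deriving via Koszul), and then trace to obtain the Laplacian variation. The only cosmetic difference is that for the Laplacian the paper writes $\Delta_t f_t=-\langle\operatorname{Hess}_t f_t,g_t\rangle_t$ and invokes Lemma~\ref{lemma2} to differentiate the time-dependent inner product, whereas you differentiate $g_t^{ij}$ directly; since Lemma~\ref{lemma2} is itself proved via $\partial_t g_t^{ij}|_{t=0}=-h^{ij}$, the two computations are line-by-line equivalent.
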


\begin{proof}
By the definition of Hessian (\ref{hessian}),  we obtain
\begin{eqnarray}\label{eq45}
\frac{\partial \operatorname{Hess}_t f_t}{\partial t}(X,Y)
   &=&\nonumber \frac{\partial}{\partial t}\Big[X(Y(f_t))-(\nabla^t _X Y)(f_t)\Big] \\
   &=& \nonumber \frac{\partial}{\partial t}\Big[X(Y(f_t))-g(\nabla^t _X Y,\operatorname{grad} f_t)\Big]\\
   &=& \nonumber X(Y(\frac{\partial f_t}{\partial t}))-g(\frac{\partial}{\partial t}\nabla^t _X Y,\operatorname{grad} f_t)
    - g(\nabla^t _X Y, \operatorname{grad} \frac{\partial f_t}{\partial t}),\\
\end{eqnarray}
where $\nabla^t$ is the Levi-Civita connection with respect to $g_t$. The first variation
of the Levi-Civita connection in the direction of $h$ is given by the formula
\begin{equation}\label{eq46}
    g(\frac{\partial}{\partial t}\nabla^t _X Y\Big|_{t=0},Z)=\frac{1}{2}\Big[(\nabla_X h)(Y,Z)+(\nabla_Y h)(X,Z)-(\nabla_Z h)(X,Y)\Big],
\end{equation}
(see \cite{Besse}). Here $X,Y,Z\in\Gamma(TM)$ (all independent of time $t$). We conclude that
\begin{eqnarray}\label{eq470}
\frac{\partial \operatorname{Hess}_t f_t}{\partial t}\Big|_{t=0}
   &=& \nonumber \operatorname{Hess}\Big(\frac {\partial f_t}{\partial t}\Big|_{t=0}\Big)-(\nabla_{\cdot}h)(\cdot,\operatorname{grad} f)^\sigma
   +\frac{1}{2}\nabla_{\operatorname{grad} f} h.\\
\end{eqnarray}
By the Lemma \ref{lemma2}, the first variation of $\Delta_t f_t$ is given by
\begin{eqnarray}\label{eq480}
\frac{\partial \Delta_t f_t}{\partial t}\Big|_{t=0}
   &=&\nonumber -\frac{\partial }{\partial t}\Big|_{t=0} \big\langle \operatorname{Hess}_t f_t,g_t\big\rangle _t\\
   &=&\nonumber  -\big\langle \frac{\partial }{\partial t}\operatorname{Hess}_t f_t\Big|_{t=0} ,g\big\rangle -\big\langle \operatorname{Hess} f,h\big\rangle
    +2\big\langle \operatorname{Hess} f,h\circ g\big\rangle ,\\
\end{eqnarray}
by equations (\ref{eq470}), (\ref{eq480}), with $h\circ g=h$, we have
\begin{eqnarray}\label{eq8}
\frac{\partial \Delta_t f_t}{\partial t}\Big|_{t=0}
   &=&\nonumber \Delta\Big(\frac {\partial f_t}{\partial t}\Big|_{t=0}\Big)+\operatorname{Tr} (\nabla_{\cdot}h)(\cdot,\operatorname{grad} f)^\sigma\\
   && -\frac{1}{2}\operatorname{Tr} \nabla_{\operatorname{grad} f} h+\big\langle \operatorname{Hess} f,h\big\rangle ,
\end{eqnarray}
and note that
\begin{equation}\label{eq01}
    \operatorname{Tr} (\nabla_{\cdot}h)(\cdot,\operatorname{grad} f)^\sigma
    =-\big\langle \delta h, df\big\rangle ,
\end{equation}
\begin{equation}\label{eq02}
    -\frac{1}{2}\operatorname{Tr} \nabla_{\operatorname{grad} f} h=
    -\frac{1}{2}\big\langle d(\operatorname{Tr}h),df\big\rangle .
\end{equation}
The proof is completed.
\end{proof}

\begin{proof}[Proof of Theorem \ref{th2.3}] First note that

\begin{equation}\label{eq33}
    \frac{d^2}{dt^2}\mathcal{E}_{F}(g_t)\Big|_{t=0}=-\frac{d}{dt}\Big|_{t=0}\int_M \langle E_F(g_t),\frac{\partial g_t}{\partial t}\rangle _{t}v^{g_t},
\end{equation}
by the variational formulas in Lemma \ref{lemma2}, we have
\begin{eqnarray}\label{eq34}
\frac{d^2}{dt^2}\mathcal{E}_{F}(g_t)\Big|_{t=0}
   &=&\nonumber -\int_M \langle \frac{\partial}{\partial t}E_F(g_t)\Big|_{t=0},h\rangle v^{g}\\
       &&\nonumber-\int_M \langle E_F(g),k\rangle v^{g} \\
   &&\nonumber  +2\int_M \langle E_F(g),h\circ h\rangle v^{g}\\
   &&-\frac{1}{2}\int_M\langle E_F(g),h\rangle (\operatorname{\operatorname{Tr}} h)v^g.
\end{eqnarray}
Since $E_F(g)=\lambda g$, we obtain
\begin{equation}\label{eq35}
   2\int_M \langle E_F(g),h\circ h\rangle v^{g}=2\lambda  \int_M |h| ^2v^{g},
\end{equation}
\begin{equation}\label{eq36}
  -\frac{1}{2}\int_M\langle E_F(g),h\rangle (\operatorname{Tr} h)v^g=  -\frac{\lambda}{2} \int_M(\operatorname{Tr} h)^2v^g.
\end{equation}
Since $\operatorname{Vol}(M,g_t)=c$ , we have
\begin{eqnarray}\label{eq37}
  \frac{d^2}{dt^2}\Big|_{t=0} \operatorname{Vol}(M,g_t)
    = \int_M  \frac{\partial^2 v^{g_t}}{\partial t^2}\Big|_{t=0}=0,
\end{eqnarray}
by equation (\ref{eq37}), and Lemma \ref{lemma1}, we get
\begin{eqnarray}\label{eq38}
  \frac{1}{2}\int_M  \frac{\partial}{\partial t}\Big|_{t=0} \Big[(\operatorname{Tr}_{t} \frac{\partial g_t}{\partial t})v^{g_t}\Big]=0,
\end{eqnarray}
where $\operatorname{Tr}_{t} \frac{\partial g_t}{\partial t}$ is the trace of $\frac{\partial g_t}{\partial t}$ with respect to $g_t$,
from equation (\ref{eq38}), and the Lemmas \ref{lemma1} and \ref{lemma2}, we obtain
\begin{eqnarray}\label{eq39}
  0&=&\nonumber\int_M  \Big[\frac{\partial}{\partial t} (\operatorname{Tr}_{t} \frac{\partial g_t}{\partial t})\Big|_{t=0}v^{g}
  + (\operatorname{Tr} h)\frac{\partial v^{g_t}}{\partial t}\Big|_{t=0}\Big]\\
  &=&\nonumber\int_M  \Big[\frac{\partial}{\partial t} \langle g_t, \frac{\partial g_t}{\partial t}\rangle _{t}\Big|_{t=0}v^{g}
  + \frac{1}{2}(\operatorname{Tr} h)^2v^g\Big]\\
  &=&\nonumber\int_M \big[|h| ^2+(\operatorname{Tr}k)-2\langle g,h\circ h\rangle + \frac{1}{2}(\operatorname{Tr} h)^2\big]v^g\\
  &=&\int_M \big[-|h| ^2+(\operatorname{Tr}k)+ \frac{1}{2}(\operatorname{Tr} h)^2\big]v^g,
  \end{eqnarray}
by equation (\ref{eq39}), the second term on the left-hand side of (\ref{eq34}) is
\begin{eqnarray}\label{eq40}
 -\int_M \langle E_F(g),k\rangle v^{g}
   &=&\nonumber -\lambda \int_M (\operatorname{Tr}k)v^g \\
   &=&  \int_M \big[-\lambda|h| ^2+ \frac{\lambda}{2}(\operatorname{Tr} h)^2\big]v^g.
\end{eqnarray}
We compute
\begin{eqnarray}\label{eq41}
  \frac{\partial}{\partial t}E_F(g_t)\Big|_{t=0}
   &=&\nonumber \frac{\partial F'(S_t)}{\partial t}\Big|_{t=0} \operatorname{Ric}+ F'(S)\frac{\partial \operatorname{Ric}_t}{\partial t}\Big|_{t=0}   \\
   & & \nonumber -\frac{\partial \operatorname{Hess}_t F'(S_t)}{\partial t}\Big|_{t=0} -\frac{\partial \Delta_t F'(S_t)}{\partial t}\Big|_{t=0}g\\
   & & -\Delta F'(S)h-\frac{1}{2}\frac{\partial F(S_t)}{\partial t}\Big|_{t=0} g-\frac{1}{2}F(S)h,
\end{eqnarray}
note that, from the Lemma \ref{lemma1}, we have
\begin{equation}\label{eq42}
    \frac{\partial F(S_t)}{\partial t}\Big|_{t=0}
    =F'(S)\big[\Delta(\operatorname{Tr} h)+\delta(\delta h)-\langle \operatorname{Ric},h\rangle \big],
\end{equation}
\begin{equation}\label{eq43}
    \frac{\partial F'(S_t)}{\partial t}\Big|_{t=0}
    =F''(S)\big[\Delta(\operatorname{Tr} h)+\delta(\delta h)-\langle \operatorname{Ric},h\rangle \big],
\end{equation}
by  Lemma \ref{lemma2}, and the definition of Lichnerowicz Laplacian, we get
\begin{eqnarray}\label{eq44}
\frac{\partial \operatorname{Ric}_t}{\partial t}\Big|_{t=0}
   &=&\nonumber \frac{1}{2 } \Delta_L h-\delta^*(\delta h)-\frac{1}{2}\operatorname{Hess}(\operatorname{Tr} h) \\
   &=&\nonumber \frac{1}{2} \nabla^*\nabla h-\overset{\circ}{R}h+\frac{1}{2}\operatorname{Ric}\circ h +\frac{1}{2}h\circ \operatorname{Ric} \\
   &&-\delta^*(\delta h) -\frac{1}{2}\operatorname{Hess}(\operatorname{Tr} h),
\end{eqnarray}
from equations (\ref{eq41}), (\ref{eq42}), (\ref{eq43}), (\ref{eq44}), and the Lemma \ref{lemma3}, we have
\begin{eqnarray}\label{eq45}
  \frac{\partial}{\partial t}E_F(g_t)\Big|_{t=0}
   &=&\nonumber f\operatorname{Ric}
    +F'(S)\Big[\frac{1}{2} \nabla^*\nabla h-\overset{\circ}{R}h+\frac{1}{2}\operatorname{Ric}\circ h +\frac{1}{2}h\circ \operatorname{Ric} \\
   &&\nonumber-\delta^*(\delta h) -\frac{1}{2}\operatorname{Hess}(\operatorname{Tr} h)\Big]
   -\operatorname{Hess}f
   +(\nabla_{\cdot}h)(\cdot,\operatorname{grad} F'(S))^\sigma\\
   &&\nonumber-\frac{1}{2}\nabla_{\operatorname{grad} F'(S)} h
   -(\Delta f)g
   +\big\langle \delta h+\frac{1}{2}d(\operatorname{Tr}h),dF'(S)\big\rangle g\\
   &&\nonumber-\big\langle \operatorname{Hess} F'(S),h\big\rangle g-(\Delta F'(S))h
   -\frac{F'(S)}{2}\Big[\Delta(\operatorname{Tr} h)\\
   &&+\delta(\delta h)-\langle \operatorname{Ric},h\rangle \Big] g
   -\frac{1}{2}F(S)h,
\end{eqnarray}
where $f=F''(S)\big[\Delta(\operatorname{Tr} h)+\delta(\delta h)-\langle \operatorname{Ric},h\rangle \big]$.\\
Note that, from the definitions of the composition (\ref{composition}) and the $F$-Einstein tensor (\ref{eq3}),
 and the condition $E_F(g)=\lambda g$, we get
\begin{eqnarray}\label{eq46}
  \frac{F'(S)}{2}\big(\operatorname{Ric}\circ h+h\circ\operatorname{Ric}\big)
   &=&\nonumber \big[\lambda+\Delta F'(S)+\frac{1}{2}F(S)\big]h  \\
   &&  +h(\nabla_{\cdot}\operatorname{grad}F'(S),\cdot)^\sigma,
\end{eqnarray}
\begin{eqnarray}\label{eq47}
\frac{F'(S)}{2}\langle \operatorname{Ric},h\rangle g
   &=&\nonumber\frac{1}{2} \big[\lambda+\Delta F'(S)+\frac{1}{2}F(S)\big] (\operatorname{Tr} h)g\\
   & &+\frac{1}{2}\langle \operatorname{Hess}F'(S),h\rangle g.
\end{eqnarray}
From equations (\ref{eq34}), (\ref{eq35}), (\ref{eq36}), (\ref{eq40}), (\ref{eq45}), (\ref{eq46}) and (\ref{eq47}), we have

\begin{eqnarray}\label{eq48}
\frac{d^2}{dt^2}\mathcal{E}_{F}(g_t)\Big|_{t=0}
   &=&\nonumber \int_M \Big\langle -f\operatorname{Ric}-\frac{F'(S)}{2}\nabla^*\nabla h+F'(S)\overset{\circ}{R}h\\
   &&\nonumber-h(\nabla_\cdot \operatorname{grad}F'(S),\cdot)^\sigma
               +F'(S)\delta^*(\delta h)\\
   &&\nonumber +\frac{1}{2}F'(S)\operatorname{Hess}(\operatorname{Tr}h)+\operatorname{Hess}f
               -(\nabla_\cdot h)(\cdot,\operatorname{grad}F'(S))^\sigma\\
   &&\nonumber+\frac{1}{2}\nabla_{\operatorname{grad}F'(S)}h
               +(\Delta f) g-\langle \delta h+\frac{1}{2}d(\operatorname{Tr} h),dF'(S)\rangle g\\
   &&\nonumber +\frac{F'(S)}{2}[\Delta(\operatorname{Tr} h)+\delta(\delta h)]g-\frac{1}{2}\big[\lambda
   +\Delta F'(S)\\
   &&+\frac{1}{2}F(S)\big](\operatorname{Tr} h)g+\frac{1}{2}\langle \operatorname{Hess}F'(S),h\rangle g,h\Big\rangle v^g,
\end{eqnarray}
the Theorem follows from equation (\ref{eq48}).
\end{proof}

\begin{remark}
If $F(s)=s$, for all $s\in\mathbb{R}$. Note that, the condition $E_F(g)=\lambda g$ is equivalent to $\operatorname{Ric}=\big[\lambda+\frac{S}{2}\big]g$. That is, $g$ is Einstein Riemannian metric with constant $\mu=\lambda+\frac{S}{2}$. In this case, we have
\begin{eqnarray*}
T_0(h)
   &=& -\frac{1}{2}\nabla^*\nabla h
       +\overset{\circ}{R}h
       +\delta^*(\delta h)
       +\frac{1}{2}\operatorname{Hess}(\operatorname{Tr}h)\\
   & & +\frac{1}{2}\big[\Delta(\operatorname{Tr} h)
       +\delta(\delta h)\big]g
       -\frac{\mu}{2}(\operatorname{Tr} h)g,
\end{eqnarray*}
and $T_1(h)=0$. From the formula
\begin{eqnarray*}
  (\operatorname{Tr} h)\delta(\delta h)
   &=& \delta\big((\operatorname{Tr} h)\delta h\big)+\delta\big(h(\cdot,\operatorname{grad}(\operatorname{Tr} h))\big)
    +\big\langle \operatorname{Hess}(\operatorname{Tr} h),h\big\rangle ,
\end{eqnarray*}
and the divergence theorem (see \cite{baird}), the second
variation of $\mathcal{E}_{F}|_{\mathcal{M}_{c}}$ at $g$ in the direction of $h$ is given by (see \cite{Besse}, \cite{Klaus})
\begin{eqnarray*}
\frac{d^2}{dt^2}\mathcal{E}_{F}(g_t)\Big|_{t=0}
   &=& \int_M \big\langle -\frac{1}{2}\nabla^*\nabla h
       +\overset{\circ}{R}h
       +\delta^*(\delta h)
       \\
   & & +\frac{1}{2}\Delta(\operatorname{Tr} h)g
       +\delta(\delta h)g
       -\frac{\mu}{2}(\operatorname{Tr} h)g,h\big\rangle v^{g}.
\end{eqnarray*}
\end{remark}

\begin{definition}\label{stable}
A Riemannian manifold $(M,g)$ is said to be $F$-Einstein if $E_F(g)=\lambda g$ for some constant $\lambda$, where $F:\mathbb{R}\longrightarrow\mathbb{R}$ is a non-constant smooth function. We call $\lambda$ the $F$-Einstein constant of $g$. We say that a closed orientable $F$-Einstein manifold is stable (resp. strictly stable) if for any $h\in TT=\operatorname{Tr}^{-1}(0)\cap\delta^{-1}(0)$ (such tensors are called transverse traceless or $TT$-tensors)
\begin{equation*}
    \mathcal{E}''_{F}(h)=\int_M \big\langle \widehat{T}_0(h)+\widehat{T}_1(h),h\big\rangle v^{g}\leq0\quad\hbox{(resp. $<0$)},
\end{equation*}
where $\widehat{T}_0$, $\widehat{T}_1$ are the restrictions of $T_0$, $T_1$ to $TT$ respectively, given by
\begin{eqnarray*}
\widehat{T}_0(h)
   &=& -\frac{F'(S)}{2}\big[\nabla^*\nabla h
       -2\overset{\circ}{R}h\big],
\end{eqnarray*}
\begin{eqnarray*}
  \widehat{T}_1(h)
   &=& -\widehat{f}\operatorname{Ric}+\frac{1}{2}\nabla_{\operatorname{grad}F'(S)}h,
\end{eqnarray*}
and $\widehat{f}=f|_{TT}=-F''(S)\langle \operatorname{Ric},h\rangle$.
\end{definition}

\begin{remark}\quad
\begin{itemize}
  \item In the Definition \ref{stable}, $\operatorname{Tr}^{-1}(0)$ (resp. $\delta^{-1}(0)$) denotes the space of  symmetric $(0,2)$-tensor fields, whose trace (resp. divergence) vanishes on $(M,g)$.
  \item By using $\delta h=0$ and symmetry of $h$, we obtain the following formulas
  $$\langle \operatorname{Hess}\widehat{f},h\rangle=-\delta\big[ h(\operatorname{grad}\widehat{f},\cdot)\big];$$
  $$- \langle h(\nabla_\cdot \operatorname{grad}F'(S),\cdot)^\sigma,h\rangle
    -\langle(\nabla_\cdot h)(\cdot,\operatorname{grad}F'(S))^\sigma,h\rangle$$
  \begin{eqnarray*}
           \quad\qquad\qquad\qquad\qquad\qquad&=& \delta\left[(h\circ h)(\operatorname{grad}F'(S),\cdot)\right].
  \end{eqnarray*}
This explains the disappearance of these terms in $\langle \widehat{T}_1(h),h\rangle$ after integration over $M$.
  \item The Definition \ref{stable}, is a natural generalization of stable Einstein manifold (see \cite{Besse,Klaus,Koiso,Koiso3}).
  \item We call the operator $\Delta_E^F(h)=-2\big(\widehat{T}_0(h)+\widehat{T}_1(h))$ the $F$-Einstein operator.
  Thus, an $F$-Einstein manifold $(M,g)$ is stable, if the $F$-Einstein operator is nonnegative on $TT$-tensors, and strictly stable if it is positive on $TT$-tensors. If $F(s)=s$ for all $s\in\mathbb{R}$, then the $F$-Einstein operator reduces to the usual Einstein operator $\Delta_E(h)=\nabla^*\nabla h
       -2\overset{\circ}{R}h$.
\end{itemize}
\end{remark}

\begin{theorem}
Let $F\in C^\infty(\mathbb{R})$. We assume that $F'(s)\geq0$ (resp. $F'(s)>0$) for all $s\in\mathbb{R}$. Then,
any Einstein manifold of negative sectional curvature is stable (resp. strictly stable) $F$-Einstein manifold.
\end{theorem}

\begin{proof}
  Let $(M,g)$ be an Einstein manifold with Einstein constant $\mu$, i.e., $\operatorname{Ric}=\mu g$. Thus,
  $(M,g)$ is $F$-Einstein manifold with $F$-Einstein constant $\lambda=\mu F'(S)-\frac{1}{2}F(S)$.
  Moreover, $\widehat{f}=0$ and the $F$-Einstein operator becomes
  $$\Delta_E^F(h)=F'(S)\big[\nabla^*\nabla h-2\overset{\circ}{R}h\big].$$ Hence, if $F'(S)\geq0$ (resp. $F'(S)>0$)
  and the sectional curvature of $(M,g)$ is negative, then $(M,g)$ is stable (resp. strictly stable) $F$-Einstein manifold  (see \cite{Klaus,Koiso1}).
\end{proof}

\begin{remark}
Let $(M,g)$ be an $F$-Einstein manifold with $F$-Einstein constant $\lambda$. We assume that $(M,g)$ has constant scalar curvature. If $F'(S)>0$, according to (\ref{eq3}), the Riemannian manifold $(M,g)$ is Einstein with Einstein constant $\mu=F'(S)^{-1}\left(\lambda+\frac{1}{2}F(S)\right)$. Moreover, if the sectional curvature of $(M,g)$ is negative, then $(M,g)$ is strictly stable. Here, if the manifold $M$ is even-dimensional, by using (\ref{trace critical}) with $E_F(g)=\lambda g$, we can consider the smooth function $F(s)=-2\lambda+c\,s^{n/2}$ for some $c\in\mathbb{R}$.
\end{remark}

\begin{theorem}\label{th20}
Let $F\in C^\infty(\mathbb{R})$ and $(M,g)$ be a closed orientable $F$-Einstein manifold of constant sectional curvature $c>0$. We assume that $F'(s)\geq0$ and $F''(s)\leq0$ for all $s\in\mathbb{R}$. Then, $(M,g)$ is stable.
Moreover, if $F'(s)>0$ for all $s\in\mathbb{R}$, then $(M,g)$ is strictly stable.
\end{theorem}

\begin{proof}
A straightforward calculation shows that if $h\in TT$,
\begin{eqnarray}\label{3.25}
    -\frac{F'(S)}{2}\langle\nabla^*\nabla h,h\rangle
    &=&\nonumber-\frac{1}{2}\delta\big[F'(S)\langle \nabla_\cdot h,h\rangle\big]
                -\frac{1}{2}\langle \nabla_{\operatorname{grad}F'(S)}h,h\rangle\\
    & &-\frac{F'(S)}{2}\operatorname{Tr}\langle\nabla_\cdot h,\nabla_\cdot h\rangle.
\end{eqnarray}
By using $\operatorname{Tr}h=0$, we find that
\begin{equation}\label{3.26}
    F'(S)\langle\overset{\circ}{R}h,h\rangle=-c\,F'(S)|h|^2.
\end{equation}
From equations (\ref{3.25}) and (\ref{3.26}), we conclude that
\begin{eqnarray}\label{3.26}
    \mathcal{E}''_{F}(h)
    &=&\nonumber\int_M \Big[-\frac{1}{2}F'(S)\operatorname{Tr}\langle\nabla_\cdot h,\nabla_\cdot h\rangle-c\,F'(S)|h|^2\\
    & &+F''(S)\langle \operatorname{Ric},h\rangle^2\Big] v^{g}.
\end{eqnarray}
Theorem \ref{th20} follows from equation (\ref{3.26}), the assumptions $F'\geq0$, $F''\leq0$, and $c>0$.
\end{proof}

\begin{corollary}
The $n$-dimensional unit sphere $\mathbb{S}^n$ is a strictly stable $F$-Einstein manifold for all
$F\in C^\infty(\mathbb{R})$ such that $F'>0$ and $F''\leq0$.
\end{corollary}


\subsection*{Conflict of interest statement}
The author declares no conflict of interest.

\subsection*{Data availability}
Not applicable.

\end{document}